\newcommand{\bel}[1]{\begin{equation}\label{#1}}
\newcommand{\be}{\begin{equation}}
\newcommand{\ba}{\begin{eqnarray}}
\newcommand{\ea}{\end{eqnarray}}
\newcommand{\qe}{\end{equation}}
\newcommand{\N}{{\mathbb N}}
\newcommand{\dd}{\mathrm{d}}
\newcommand{\Hmm}[1]{\leavevmode{\marginpar{\tiny%
$\hbox to 0mm{\hspace*{-0.5mm}$\leftarrow$\hss}%
\vcenter{\vrule depth 0.1mm height 0.1mm width \the\marginparwidth}%
\hbox to
0mm{\hss$\rightarrow$\hspace*{-0.5mm}}$\\\relax\raggedright #1}}}
\theoremstyle{plain}
\newtheorem{thm}{Theorem}[section]
\newtheorem{lemma}[thm]{Lemma}
\newtheorem{prop}[thm]{Proposition}
\theoremstyle{definition}
\newtheorem{defn}[thm]{Definition}
\newtheorem{rem}[thm]{Remark}
\numberwithin{equation}{section}
\begin{document}

\date{\today}
\title[Liouville theorems for ancient solutions on graphs]{Liouville theorems for ancient solutions of subexponential growth to the heat equation on graphs}
\author{Bobo Hua}
\email{bobohua@fudan.edu.cn}
\address{Bobo Hua: School of Mathematical Sciences, LMNS,
Fudan University, Shanghai 200433, China; Shanghai Center for
Mathematical Sciences, Fudan University, Shanghai 200438,
China.}

\author{Wenhao Yang}
\email{whyang23@m.fudan.edu.cn}
\address{Wenhao Yang: School of Mathematical Sciences, Fudan University, Shanghai 200433, China.}

%\author{Yanhui Su}
%\email{suyh@fzu.edu.cn}
%\address{Yanhui Su: College of Mathematics and Computer Science, Fuzhou University, Fuzhou 350116, China}

\begin{abstract}
    Mosconi \cite{mosconi2021} proved Liouville theorems for ancient solutions of subexponential growth to the heat equation on a manifold with Ricci curvature bounded below.
    We extend these results to graphs with bounded geometry: for a graph with bounded geometry, any nonnegative ancient solution of subexponential growth in space and time to the heat equation is stationary,
    and thus is a harmonic solution.

    \textbf{Keywords:} Liouville theorem, heat equation, ancient solutions,
    graphs with bounded geometry.

    %\textbf{MSC 2010:} 35P15, 05C50, 58C40. 
\end{abstract}

\maketitle
%\tableofcontents

%\blue{I use the red color for the original version and the blue one for the revised version. Please have a look and remove the colors if you agree (delete the red one), Bobo.}

\section{Introduction}

%\subsection{Liouville theorem on Riemannian manifolds}

In 1975, Yau \cite{Yau75} proved the celebrated Liouville theorem  for harmonic functions: on a Riemannian manifold with nonnegative Ricci curvature any positive harmonic function is constant.
Later, Cheng and Yau \cite{ChengYau75} obtained the gradient estimate for positive harmonic functions, which implies that any harmonic function of sublinear growth on {such a manifold} is constant.
In 1997, Colding-Minicozzi \cite{ColdingMAnnals97} proved Yau's conjecture that the space of polynomial growth harmonic functions is finite dimensional.
Liouville theorems for harmonic functions on manifolds have received much attention in the literature, 
e.g. \cite{Sullivan83,Anderson83,LiTam87,Lyons87,Grigoryan90,Benjamini91,Grigoryan91,Saloff92,WangFY02,Erschler04,Li12,Brighton13}.

In 1986, Li and Yau \cite{LiYau86} proved the gradient estimate for positive solutions to the heat equation,  nowadays called the Li-Yau inequality.
An ancient solution to the heat equation is a solution defined on the time interval $(-\infty, t_0)$ for some $t_0 \in \mathbb{R}$.
As a corollary of the Li-Yau inequality, any bounded ancient solution on a manifold with nonnegative Ricci curvature is constant.
Souplet and Zhang \cite{souplet2006sharp} proved interesting Liouville theorems for positive eternal solutions (i.e. solutions defined in all space and time) of subexponential growth and also for general eternal solutions of sublinear growth on a manifold with nonnegative Ricci curvature. For such a manifold, Lin and Zhang \cite{lin2019ancient}  further proved that an ancient solution of polynomial growth to the heat equation is a polynomial in time.
The latter was extended to manifolds with polynomial volume growth by Colding-Minicozzi \cite{colding2021optimal}.

In 2020, Mosconi proved much more general Liouville theorems for nonnegative ancient solutions of subexponential growth to the heat equation on a manifold with Ricci curvature bounded below, which extended the result in \cite{souplet2006sharp}.

\begin{thm}[\cite{mosconi2021}]\label{thm:mos}
    Suppose that $M$ is a complete noncompact Riemannian manifold with $Ric_M \geq -K (K \geq 0)$.
    Let $u \in C^{\infty}\left(M \times\left(-\infty, t_0\right)\right)$ 
    %, $\Delta u=\partial_t u, u \geq 0$.
    be a nonnegative ancient solution to the heat equation on $M$.
    Let $p \in M$ and $\rho$ be the distance function. Then the following hold:

    (1) For $K = 0$, if there exists $t \in\left(-\infty, t_0\right)$ such that $u(x, t) = e^{o(\rho(x, p))}
    $ as $\rho(x, p) \rightarrow+\infty$,
    then $u$ is constant.

    (2) For $K > 0$, if $u(x, t) = e^{o(\rho(x, p)-t)} $ as $ \rho(x, p)\rightarrow+\infty$ and $t \rightarrow-\infty$,
    % and $\exists \ x_0 \in M $ such that $u(x_0, t)=e^{o(-t)}(t \rightarrow-\infty)$,
    then $u$ is independent of $t,$ and is a harmonic function.
\end{thm}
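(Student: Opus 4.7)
The strategy is to combine the Li-Yau parabolic gradient estimate, sharpened by exploiting that $u$ is defined on the full half-line $(-\infty,t_0)$, with the Souplet-Zhang logarithmic gradient estimate that converts subexponential growth into vanishing of $\nabla\log u$. By the strong minimum principle either $u\equiv 0$ or $u>0$ throughout, so I assume $u>0$. The ancient-sharpening is standard: the Li-Yau inequality on any shifted half-line $M\times(-T,t_0)$ reads
\[
\alpha\,\frac{\abs{\nabla u}^2}{u^2} - \frac{u_t}{u}\;\leq\;\frac{n\alpha^2}{2(t+T)} + \frac{n\alpha^2 K}{2(\alpha-1)} \qquad (\alpha>1),
\]
and $T\to\infty$ kills the first term on the right. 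For $K=0$, taking $\alpha\downarrow 1$ yields $\abs{\nabla\log u}^2\leq(\log u)_t$; in particular $u_t\geq 0$ (so $u(x,\cdot)$ is monotone nondecreasing) and, via $(\log u)_t=\Delta\log u+\abs{\nabla\log u}^2$, $\log u(\cdot,t)$ is subharmonic at every fixed time.

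For case (1), the plan is to feed monotonicity and the hypothesis into the Souplet-Zhang logarithmic gradient estimate for positive solutions on $\mathrm{Ric}\geq 0$ manifolds: on any parabolic cylinder $B(x,R)\times[t-R^2,t]\subset M\times(-\infty,t_0)$,
\[
\frac{\abs{\nabla u}}{u}(x,t)\;\leq\;\frac{C}{R}\br{1+\log\frac{A_R}{u(x,t)}},\qquad A_R:=\sup_{B(x,R)\times[t-R^2,t]}u.
\]
Since $u$ is nondecreasing in $t$, $A_R\leq \sup_{B(x,R)}u(\cdot,t^*)$, which by hypothesis is dominated by $C_\epsilon\exp\br{\epsilon(\rho(x,p)+R)}$ for every $\epsilon>0$. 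Substituting and sending $R\to\infty$ gives $\abs{\nabla\log u}(x,t)\leq C\epsilon$; arbitrariness of $\epsilon$ forces $\nabla u\equiv 0$. Hence $u$ is spatially constant at each time, $\Delta u=0$, and the heat equation yields $u_t=0$, so $u$ is a constant.

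For case (2), the Souplet-Zhang bound acquires an $O(\sqrt K)$-term that survives $R\to\infty$, so this step alone cannot conclude, and I would replace it with the integrated Li-Yau-Harnack for $\mathrm{Ric}\geq -K$, obtained by running the sharpened differential inequality along a space-time geodesic segment and optimizing in $\alpha$:
\[
u(x_1,t_1)\;\leq\;u(x_2,t_2)\exp\br{\frac{C_1\rho(x_1,x_2)^2}{t_2-t_1} + C_2 K(t_2-t_1)},\qquad t_1<t_2<t_0,
\]
with $C_1,C_2>0$ depending only on $n$. Applying this once with $x_2=p$ and once with $x_1=p$ gives two-sided control on $u(x,t_1)$ against $u(p,t_2)$; inserting the joint subexponential bound $u(y,\tau)\leq C_\epsilon\exp\br{\epsilon(\rho(y,p)+\abs{\tau})}$, optimizing $s=t_2-t_1$ to balance $C_1\rho^2/s$ against $(C_2K+\epsilon)s$ (essentially $s\sim\rho/\sqrt K$), and then $\epsilon\downarrow 0$ forces $u_t\leq 0$; the symmetric argument yields $u_t\geq 0$, so $u_t\equiv 0$, and the heat equation identifies $u$ as a harmonic function independent of $t$. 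The main obstacle is precisely this balancing step: the $K$-term in the Li-Yau-Harnack is irreducible in every ancient limit, so the joint smallness of $\log u$ relative to $\rho(x,p)-t$ (rather than separate spatial and temporal decay) is essential, and the optimization must close without residue; this is exactly why the hypothesis in case (2) is strengthened to the joint form $e^{o(\rho(x,p)-t)}$.
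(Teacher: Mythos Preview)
This theorem is quoted from Mosconi and is not proved in the paper; however, the paper explicitly describes Mosconi's strategy and then implements it in the graph setting, so that is the benchmark for comparison. Mosconi's route is structural: the cone of nonnegative ancient solutions is represented via Choquet's theorem as $u(x,t)=\int e^{\lambda t}w_\lambda(x)\,d\nu(\lambda)$, where the extreme rays are identified (using the Harnack inequality) as separated solutions $e^{\lambda t}w(x)$ with $\Delta w=\lambda w$; one then shows that for $\lambda>0$ the positive eigenfunction $w_\lambda$ has at least exponential spatial growth, so the spatial subexponential hypothesis forces $\nu((0,\infty))=0$, while for $\lambda<0$ the factor $e^{\lambda t}$ blows up like $e^{|\lambda|\,|t|}$ as $t\to-\infty$, contradicting the temporal subexponential hypothesis. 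Hence $\operatorname{supp}\nu=\{0\}$ and $u$ is harmonic (constant when $K=0$ by Yau).

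Your approach is entirely different: you work directly with Li--Yau and Souplet--Zhang gradient estimates and never invoke any integral representation. For case~(1) this is close to the original Souplet--Zhang argument for eternal solutions, with the extra observation that ancient Li--Yau gives $u_t\ge0$, so the cylinder sup equals the top-slice sup and the single-time hypothesis suffices. This is a legitimate alternative for $K=0$; it is more hands-on but less flexible than the Choquet route. (You should be explicit that once $u(\cdot,t^*)\equiv c$, monotonicity gives $u\le c$ for $s<t^*$ so the estimate reruns, and then a short uniqueness/analyticity step handles $s>t^*$.)

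For case~(2) there is a genuine gap. You acknowledge that both the $\sqrt K$ term in Souplet--Zhang and the $C_2K(t_2-t_1)$ term in the integrated Harnack are irreducible, and then assert that ``balancing $s\sim\rho/\sqrt K$'' together with the joint hypothesis forces $u_t\le0$ and, by a ``symmetric argument'', $u_t\ge0$. But balancing $C_1\rho^2/s$ against $C_2Ks$ produces an exponent $\approx 2\sqrt{C_1C_2K}\,\rho$ along a particular space--time diagonal; it does not compare values of $u$ at the same point and two distinct times, so no sign on $u_t$ falls out. Moreover, for $K>0$ there exist positive ancient solutions $e^{\lambda t}w(x)$ with $\lambda<0$ (so $u_t<0$), and your sketch provides no mechanism by which the hypothesis $e^{o(\rho-t)}$ rules these out. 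In Mosconi's framework this elimination is transparent: the representation isolates each $\lambda$, and $\lambda<0$ is killed directly by the temporal growth of $e^{\lambda t}$. Your proposal offers no substitute for this step, so case~(2) does not close.
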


We aim to extend the above theorem to graphs. We first introduce the setting of graphs. Assume $G = (V,E)$ is a connected, locally finite, simple and undirected graph, consisting of the set of vertices $V$ and the set of edges $E$.
Vertices $x$ and $y$ are called neighbors, denoted by $x \sim y$, if there is an edge connecting them, i.e. ${\{x,y\} \in E}$.
For any vertex $x$, deg$x$ is the number of neighbors of $x$.

Let
$$
    \begin{aligned}
        w: E       & \rightarrow(0,+\infty),    \\
        e=\{x, y\} & \mapsto w_e=w_{xy}=w_{yx},
    \end{aligned}
$$
be the weight function on edges, and
$$
    \begin{aligned}
        m: V & \rightarrow(0,+\infty), \\
        x    & \mapsto m_x,
    \end{aligned}
$$ be the weight function on vertices. We call $G=(V,E,w,m)$ a weighted graph. For a weighted graph $G=(V,E,w,m)$,
the Laplace operator $\Delta$ on $G$ is defined as follows: for any function
$f : V \to \mathbb{R}, $
$$
    \Delta f(x)=\sum_{\substack{y \sim x \\ y \in V}} \frac{w_{xy}}{m_x}(f(y)-f(x)), \   \text{for all} \ x \in V.
$$

We denote by
$\rho(x, y) =\min \{ l | x = z_0 \sim ... \sim z_l = y\}$
the combinatorial distance between vertices $x$ and $y,$ by $B_n\left(x_0\right):=\left\{x \in V: \rho\left(x, x_0\right) \leq n\right\}$ and
$\partial B_n\left(x_0\right):=\left\{x \in V : \rho\left(x, x_0\right) = n\right\}$ for $n\in \N,$ the ball and sphere centered at $x_0$ respectively.

%\subsection{Liouville theorems on graphs}
We recall Liouville theorems for harmonic functions on graphs.
The Liouville theorem for positive harmonic functions is proved
by Delmotte \cite{DelmottePolynomial98} on a
graph satisfying the volume doubling property and the Poincaré inequality, and by Bauer et al.\cite{bauer2015li,HLLY19} on a graph with
nonnegative curvature in the sense of CDE or CDE'. The Liouville theorem for bounded harmonic functions is proved by the first author
\cite{hua2019Liouville} for a graph with nonnegative Bakry-Émery curvature, and by Jost, M\"unch and Rose \cite{jost2019liouville}
for a graph with nonnegative Ollivier curvature.

In this paper, we prove Liouville theorems for nonnegative ancient solutions of subexponential growth to the heat equation on graphs. We first introduce the definition of graphs with bounded geometry.

\begin{defn}\label{BG}
    We call a weighted graph $G=(V,E,w,m)$ has \emph{bounded geometry} if
    there exists $c_0>0$ such that

    $$
        \left\{\begin{aligned}
            c_0^{-1} \leq w_e \leq c_0, \       & \forall e \in E , \\
            c_0^{-1} \leq m_x \leq c_0, \       & \forall x \in V , \\
            \operatorname{deg} x \leq c_0, \  & \forall x \in V.
        \end{aligned}\right.
    $$
\end{defn}

The following is the main result.
\begin{thm}\label{main}
    Let $G=(V,E,w,m)$ be an infinite graph with bounded geometry,
    $u \in C^{\infty}\left(V \times\left(-\infty, t_0\right)\right)$
    %$  \Delta u=\partial_t u, u \geq 0$ 
    be a nonnegative ancient solution to the heat equation on $G$.
    Assume $x_0 \in V $, and let $\lambda_1(G)$ be the bottom
    of the spectrum of $G$. Then the following hold:

    (1) If $\lambda_1(G)=0,$ and there exists $t \in\left(-\infty, t_0\right)$ such that $$u(x, t)=e^{o(\rho(x, x_0))},\rho(x, x_0) \rightarrow+\infty,$$
    then $u$ is independent of $t,$ and is a harmonic function on $G$.

    (2) If $\lambda_1(G)>0,$ and there exist  $t \in\left(-\infty, t_0\right)$ and $y_0 \in V$ such that
    $$u(x, t)=e^{o(\rho(x, x_0))},\rho(x, x_0)\rightarrow+\infty,
   \  {\rm and}\ 
    u(y_0, t)=e^{o(-t)}, t \rightarrow-\infty,$$
    then $u$ is independent of $t,$ and is a harmonic function on $G$.
\end{thm}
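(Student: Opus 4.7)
The plan is to follow Mosconi's general template while replacing the Li--Yau gradient estimates (which rely on a Ricci lower bound) by tools adapted to bounded geometry. The key structural feature is that the bounded-geometry assumption makes $\Delta$ a bounded self-adjoint operator on $\ell^2(V,m)$; hence the heat semigroup $(e^{t\Delta})_{t\geq 0}$ is well-defined with kernel $p_t(x,y)$, the Cauchy problem is uniquely solvable for data of subexponential growth, and $t\mapsto u(x,t)$ is real-analytic with $\partial_t u = \Delta u$.

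The first step is to establish the semigroup representation
\begin{equation*}
u(x,t) = \sum_{y\in V} p_{t-s}(x,y)\, m_y\, u(y,s),\qquad s<t<t_0,
\end{equation*}
via uniqueness of the Cauchy problem, where the superexponential spatial decay of $p_t$ handles convergence. The second step is to invoke Davies-type off-diagonal upper bounds for bounded-geometry graphs,
\begin{equation*}
p_t(x,y) \leq C \exp\!\bigl(-\lambda_1 t - \zeta(\rho(x,y), t)\bigr),
\end{equation*}
where $\zeta\geq 0$ and $\zeta(r,t)$ grows faster than any linear function of $r$ as $r/t\to\infty$. The third step is to propagate the single-time spatial hypothesis $u(\cdot,t^*)=e^{o(\rho(\cdot,x_0))}$ to all times $s\leq t^*$ and, in Case (2), to extend the pointwise time hypothesis at $y_0$ to all vertices at the cost of a factor depending on $\rho(y,y_0)$. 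This propagation relies on short-time parabolic Harnack estimates inherited from the boundedness of $\Delta$ together with the Markov property of the semigroup.

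Substituting the propagated bounds into the representation and letting $s\to-\infty$ then finishes the argument. In Case (1) with $\lambda_1=0$, the spatial decay $e^{-\zeta}$ dominates the subexponential spatial growth of $u(\cdot,s)$, and combined with stochastic completeness $\sum_y p_{t-s}(x,y)m_y=1$ (valid for bounded-geometry graphs) this forces $u(x,t)$ to be independent of $t$. In Case (2) with $\lambda_1>0$, the additional factor $e^{-\lambda_1(t-s)}$ beats the subexponential time growth $e^{o(-s)}$, again yielding time-independence. Once $\partial_t u\equiv 0$ is established, the heat equation gives $\Delta u=0$, so $u$ is harmonic.

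The main obstacle will be the propagation-of-growth step: the standard parabolic Harnack inequality on infinite graphs (Delmotte) requires volume doubling and a Poincar\'e inequality, neither of which is assumed here, so a weaker but more elementary statement, built from iterations of the heat semigroup on unit-length time steps, must be extracted from bounded geometry alone. A further technical point is to match the rate of spatial decay $\zeta$ in the Davies bound against the subexponential growth of $u$, so that the tail of the sum over $y$ far from $x$ in the representation formula remains negligible uniformly in $t-s$ as $s\to-\infty$.
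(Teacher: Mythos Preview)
Your plan is a genuinely different route from the paper's, which does not use the heat kernel representation at all. The paper follows Mosconi's template literally: it shows, via a local Harnack inequality for bounded-geometry graphs, that every nonzero extreme point of the cone $C$ of nonnegative ancient solutions has the separated form $e^{\lambda t}w(x)$ with $\Delta w=\lambda w$; then Choquet's theorem and disintegration give
\[
u(x,t)=\int_{[-\lambda_1,\infty)} e^{\lambda t}w_\lambda(x)\,\dd\nu(\lambda),
\]
and an elementary growth estimate ($M_{n+1}/M_n\geq 1+\lambda c_0^{-3}$) shows that any positive $\lambda$-eigenfunction with $\lambda>0$ grows at least exponentially, contradicting the spatial hypothesis via a Lusin-type density argument unless $\operatorname{supp}\nu=\{0\}$. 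In Case~(2) the time hypothesis at $y_0$ is used only to exclude $\lambda<0$ from $\operatorname{supp}\nu$.

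Your heat-kernel scheme, by contrast, has a real gap at the point where you write ``this forces $u(x,t)$ to be independent of $t$.'' The ingredients you list---the semigroup identity $u(\cdot,t)=e^{(t-s)\Delta}u(\cdot,s)$, Davies off-diagonal bounds $p_\tau(x,y)\leq Ce^{-\lambda_1\tau-\zeta(\rho,\tau)}$, and stochastic completeness---do not by themselves produce time-independence. In Case~(1) ($\lambda_1=0$) there is no temporal decay in the Davies bound at all, so letting $s\to-\infty$ just reproduces the tautology $u(x,t)=u(x,t)$; no difference $u(x,t)-u(x,t')$ is shown to vanish. In Case~(2) you cannot simultaneously invoke $p_\tau\leq Ce^{-\lambda_1\tau}$ pointwise and $\sum_y p_\tau(x,y)m_y=1$: the former is useless once summed against $u(\cdot,s)$ over an exponentially growing vertex set, and propagating the spatial bound backwards via local Harnack gives $u(\cdot,s)\leq e^{C_1(t^*-s)}u(\cdot,t^*)$ with $C_1$ a fixed geometric constant that need not be $\leq\lambda_1$, so the competition $e^{-\lambda_1(t-s)}$ versus $e^{C_1(t^*-s)}$ goes the wrong way. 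The Choquet/extremal decomposition is precisely what supplies the missing mechanism: it isolates the $e^{\lambda t}$ factors, so that exponential lower bounds on eigenfunctions can be played directly against the subexponential hypothesis.
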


%\begin{rem}
%   Subexponential growth $u(x, t) = e^{o(\rho(x, x_0))} \ 
%  (\rho(x, x_0) \rightarrow+\infty)$ 
%  strictly implies $\ln (u(x, t)+1) = o\left(\rho\left(x, x_0\right)\right) \ (\rho\left(x, x_0\right) \rightarrow+\infty).$
%\end{rem}

\begin{rem}
\begin{enumerate}
\item
  The above theorem reduces the Liouville property of ancient solutions of subexponential growth to that of harmonic functions.  As a corollary, any bounded ancient solution is constant for a graph with nonnegative Bakry-\'Emery curvature or Ollivier curvature.
  \item Compared with Theorem~\ref{thm:mos}, we use the bottom of the spectrum $\lambda_1(G),$ instead of the curvature lower bound, to classify the cases.
    \end{enumerate}
\end{rem}

We follow the proof strategy of Mosconi \cite{mosconi2021}:
First, by Choquet Representation Theorem of convex cones, nonnegative ancient solutions to the heat equation can be
written as integral of extreme points of the cone.
Mosconi used the Harnack inequality for positive solutions to the heat equation on manifolds with Ricci curvature bounded below to classify those extreme points.
In particular, any extreme point can be expressed as $e^{\lambda t}w(x)$ in the form of separation of variables, where $w$ is a generalized eigenfunction satisfying $ \Delta w = \lambda w$.
Moreover, one can show that these generalized eigenfunctions $w(x)$ grow at least exponentially.
Thus, it contradicts the assumption of subexponential growth of ancient solutions.
The key point is to classify these extreme points.
However, the Harnack inequality on graphs with curvature conditions remains still open.
In this paper, we replace the assumption of lower bounded curvature with the assumption of bounded geometry.
In this case, we use the local Harnack inequality for the heat equation by Lin et al. \cite{lin2017gradient}, which suffices to classify extreme points.
Moreover, by the assumption of bounded geometry we get the exponential growth rate of generalized eigenfunctions
on such graphs, and hence prove the theorem.

\section{The Representation of Nonnegative Ancient Solutions to the Heat Equation}

First, we introduce some notations.
Let $X$ be a real linear space and let $ A \subset X$.
$A$ is called a convex cone if it is a convex set satisfying
    $\lambda x \in A,$ for all $x \in A, \lambda > 0;$
$A$ is called a proper convex cone if it is a convex cone satisfying
    $A \cap -A = \{0\} \text{ or } \varnothing.$
A topological linear space is called a locally convex space provided that there exists a local base at the origin consisting of convex sets.

Similar to extreme points of convex sets, we can define extreme points and extreme rays of convex cones. Let $A$ be a convex cone. For any $a \in A$, denote by $r_a = \{\lambda a: \lambda > 0\}$ the ray passing through $a$. For $a \in A$, we call $r_a$ an extreme ray provided that $u \in r_a$, $v \in A$ and $u - v \in A$ imply $v = ku$ for some $k > 0.$ Denote by $\operatorname{Ext}(A)$ the set of extreme points of $A,$ which is the union of all extreme rays.

Let $C:= \left\{u \in C^{\infty}\left(V \times \left(-\infty, t_0\right)\right): \Delta u = \partial_t u, u \geq 0\right\}$
be the set of nonnegative ancient solutions to the heat equation, which is a proper convex cone.

The following is the local Harnack inequality for the heat equation
on a graph with bounded geometry.

\begin{lemma}[Local Harnack inequality, Theorem 3 in \cite{lin2017gradient}]\label{Harnack}
  Let $G = (V, E, w, m)$ be a graph with bounded geometry.
    If $u \in C^{\infty}\left(V \times \left(-\infty, +\infty\right)\right)$
    satisfies $\Delta u = \partial_t u$ and $u \geq 0$,
    then for all $x, y \in V$ and $t_1 < t_2$,
    $$
        u\left(x, t_1\right) \leq  u\left(y, t_2\right) \exp \left\{
        C_1\left(t_2 - t_1\right)
        +\frac{C_2(\rho^2(x, y))}{t_2 - t_1} \right\},
    $$
    where $C_1$ and $C_2$ are constants.
\end{lemma}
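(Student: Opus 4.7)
Plan: I would prove this Harnack estimate by (i) establishing a one-step inequality for neighbouring vertices via Duhamel's formula, (ii) chaining it along a geodesic path from $x$ to $y$, and (iii) absorbing the resulting prefactor into the stated Gaussian exponential. Bounded geometry alone controls every constant that appears; no curvature hypothesis is needed.

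For (i), I would rewrite the heat equation at $y$ as $\partial_t u(y,t) + D(y)\, u(y,t) = \sum_{z\sim y}(w_{yz}/m_y)\, u(z,t)$, where $D(y) := \sum_{z\sim y} w_{yz}/m_y \le D_{\max}$ is bounded in terms of $c_0$. Multiplying by the integrating factor $e^{D(y)t}$ and integrating from $t_1$ to $t_2$ yields the pointwise Duhamel identity
\[
u(y, t_2) = e^{-D(y)(t_2-t_1)} u(y, t_1) + \int_{t_1}^{t_2} e^{-D(y)(t_2-s)} \sum_{z\sim y} \tfrac{w_{yz}}{m_y}\, u(z, s)\, ds.
\]
For $y \sim x$, retaining only the $z = x$ summand, using the analogous lower bound $u(x, s) \ge e^{-D_{\max}(s - t_1)} u(x, t_1)$ at $x$, and invoking $w_{xy}/m_y \ge c_0^{-2}$, I obtain
\[
u(y, t_2) \ge c_0^{-2} (t_2 - t_1)\, e^{-2 D_{\max}(t_2 - t_1)}\, u(x, t_1).
\]
The case $y = x$ follows directly from the Duhamel identity as $u(x, t_2) \ge e^{-D_{\max}(t_2 - t_1)} u(x, t_1)$.

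For (ii), given $\rho(x, y) = n \ge 1$, I would fix a geodesic chain $x = z_0 \sim z_1 \sim \cdots \sim z_n = y$ and partition $[t_1, t_2]$ into $n$ equal subintervals $[s_i, s_{i+1}]$. Inverting the one-step bound on each subinterval (from $z_i$ at time $s_i$ to $z_{i+1}$ at time $s_{i+1}$) and multiplying gives
\[
u(x, t_1) \le \Big(\tfrac{c_0^2\, n}{t_2 - t_1}\Big)^n e^{2 D_{\max}(t_2 - t_1)}\, u(y, t_2).
\]
For (iii), taking logarithms and splitting into the regimes $c_0^2 n/(t_2-t_1) \le e$ (the prefactor contributes $O(t_2-t_1)$ via AM--GM on $n \le \tfrac12(n^2/(t_2-t_1) + (t_2-t_1))$) and $c_0^2 n/(t_2-t_1) > e$ (where $\log x \le x - 1$ bounds $n \log(c_0^2 n/(t_2-t_1))$ by $c_0^2 n^2/(t_2-t_1)$) would absorb the prefactor into $\exp(C_1(t_2-t_1) + C_2 \rho^2(x, y)/(t_2-t_1))$ with suitable constants. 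The case $n = 0$ is the same-point bound.

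Main obstacle: The argument is essentially bookkeeping once one commits to Duhamel and chaining. The only genuine subtlety is that the Duhamel identity is applied pointwise at $y$, so no integrability of $u$ in the spatial variable is required --- every term discarded is nonnegative. A natural alternative would be to derive a Li--Yau-type gradient estimate and integrate along a space-time path, but on graphs the discrete Laplacian violates the chain rule and that route is substantially more technical; the Duhamel--chain approach sidesteps this issue entirely.
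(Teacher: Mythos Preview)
The paper does not prove this lemma; it is quoted as Theorem~3 of \cite{lin2017gradient}, where it is derived as a corollary of a Li--Yau-type gradient estimate for positive heat solutions. Your Duhamel--chaining argument is correct and self-contained: the one-step neighbour bound, the geodesic concatenation over $n$ equal subintervals, and the absorption of the factor $(c_0^2 n/(t_2-t_1))^n$ into the Gaussian exponential all go through as written, and every constant is controlled by $c_0$ alone. This route is considerably more elementary than the gradient-estimate approach in the cited reference and, as you note, bypasses the chain-rule obstructions that make discrete Li--Yau estimates delicate; conversely, the Li--Yau route yields sharper constants and a pointwise differential Harnack inequality that your argument does not see. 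For the uses in this paper---only the qualitative inequality is ever invoked, in Propositions~\ref{Ext} and~\ref{Cond}---your proof is entirely adequate. One cosmetic point: in your regime $c_0^2 n/(t_2-t_1)\le e$ the AM--GM bound $n\le \tfrac12\bigl(n^2/(t_2-t_1)+(t_2-t_1)\bigr)$ contributes to \emph{both} exponential terms, not only to $C_1(t_2-t_1)$ as your parenthetical suggests, but this does not affect the conclusion.
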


By the local Harnack inequality, one can classify 
extreme points 
of $C$.
A function $w$ is called an (generalized) eigenfunction, or $\lambda$-eigenfunction if $\Delta w=\lambda w.$
\begin{prop}\label{Ext}
    If $0 \neq u \in \operatorname{Ext}(C)$,
    then there exists a unique representation $u(x, t) = e^{\lambda t} w(x)$,
    where $\lambda \in \mathbb{R}$ and $w$ is a $\lambda -$eigenfunction.%, i.e. $w$ satisfies $\Delta w = \lambda w$.
\end{prop}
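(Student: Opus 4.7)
The plan is to use time-translation invariance of the heat equation together with the extremality of $u$ to force a separation of variables in time. For each $s > 0$, define the time-translate $u_s(x,t) := u(x, t-s)$, which, restricted to $V \times (-\infty, t_0)$, is again a nonnegative ancient solution and hence lies in $C$. Applying Lemma \ref{Harnack} with $y = x$ and the time interval $[t-s, t]$ yields $u(x, t-s) \leq u(x, t)\, e^{C_1 s}$, i.e., $u_s \leq e^{C_1 s} u$ pointwise on $V \times (-\infty, t_0)$. Consequently both $u_s$ and $e^{C_1 s} u - u_s$ belong to $C$.

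Since $e^{C_1 s} u$ lies on the extreme ray $r_u$, the definition of extreme ray forces $u_s = K(s)\, u$ for some scalar $K(s) > 0$. (We may assume $u \not\equiv 0$; then Lemma \ref{Harnack} propagates positivity and gives $u > 0$ throughout $V \times (-\infty, t_0)$, making $K(s)$ unambiguously equal to the ratio $u(x, t-s)/u(x, t)$.) The identity $u_{s_1+s_2} = (u_{s_1})_{s_2}$ translates into the Cauchy functional equation $K(s_1+s_2) = K(s_1)K(s_2)$ for $s_1, s_2 > 0$, extended to all of $\mathbb{R}$ by multiplicativity. Since $K(s)$ is smooth in $s$ (inherited from the smoothness of $u$ in $t$), it follows that $K(s) = e^{-\lambda s}$ for a unique $\lambda \in \mathbb{R}$. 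Fixing any reference time $t_* \in (-\infty, t_0)$ and setting $w(x) := e^{-\lambda t_*} u(x, t_*)$ then yields $u(x, t) = e^{\lambda t} w(x)$ for every $t \in (-\infty, t_0)$. Inserting this into $\partial_t u = \Delta u$ gives $\Delta w = \lambda w$; uniqueness of the pair $(\lambda, w)$ is immediate from evaluating $e^{\lambda t} w = e^{\lambda' t} w'$ at two distinct values of $t$ and using $w \not\equiv 0$.

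The main obstacle is the step $u_s = K(s) u$, that is, upgrading the pointwise inequality $u_s \leq e^{C_1 s} u$ to an identity with a single scalar multiplier valid uniformly on $V \times (-\infty, t_0)$. This is precisely where the extreme-ray hypothesis enters, and it is the heart of the argument; everything else is routine once this step is in hand. The role of bounded geometry is confined to providing the local Harnack inequality of Lemma \ref{Harnack}, whose one-sided form at $y = x$ is what forces $e^{C_1 s} u - u_s$ to be nonnegative and thus makes the extreme-ray dichotomy applicable.
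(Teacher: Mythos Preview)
Your argument is correct and essentially identical to the paper's: both use the Harnack inequality on a time shift to get $u(\cdot,t-s)\le e^{C_1 s}u(\cdot,t)$, then invoke extremality to force $u(\cdot,t-s)=K(s)u(\cdot,t)$, and finally separate variables. The only cosmetic difference is that the paper extracts $\lambda$ by directly computing $\partial_t u=\lim_{a\to 0}\frac{1-K(a)}{a}\,u$, whereas you pass through the multiplicative Cauchy equation for $K$; both routes are standard and equivalent.
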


%We need the following lemma to prove Proposition \ref{Ext}.

\begin{proof}
    Note that
    \begin{equation}\label{minimal}
        \operatorname{Ext}(C)=\{u \in C: \text{if } \exists \  v \in C , v \leq  u , v=k u \ (k>0)\}.
    \end{equation}

    % If $u=0$, taking $w = 0$ suffices. Otherwise,
    % For any $0\neq u \in \operatorname{Ext}(C)$,

  For $u\neq 0,$ by Lemma~\ref{Harnack}, one can show that $u>0.$   Moreover, letting $x=y, t_2=t$, and $t_1=t-a \ (a>0)$ in Lemma \ref{Harnack}, we have
    $$u(x, t-a) \leq u(x, t) \cdot e^{C_1 a}.$$

   Set $v(x, t)=u(x, t-a)$.
    Clearly, $v \in C$, and it satisfies
    $$v(x, t) \leq e^{C_1 a} u(x, t).$$ By (\ref{minimal}), we have
    $v(x, t)=k(u, a) u(x, t)$, where the constant $k$ depends only on
    the function $u$ and $a$. Hence,
    $$    \partial_t u(x, t)=\lim_{a\to 0} \frac{u(x, t)-u(x, t-a)}{t-(t-a)} =\lim_{a\to 0}\frac{1-k(u, a)}{a} u(x, t)=:\lambda u(x,t),
    $$ where $\lambda$ depends only on $u.$

By integrating over time, we have
    $$u(x, t) =e^{\lambda t} w(x).$$ By the heat equation of $u,$
    \begin{equation*}
        \Delta w=\lambda w.
    \end{equation*}

This yields the representation, and the uniqueness follows directly.
 %   This proves the proposition.

\end{proof}

To obtain the representation theorem, we refer to Choquet Representation Theorem for convex cones.

\begin{lemma}[Choquet Representation Theorem, Theorem 30.22 in \cite{choquet1969lectures}]\label{Choquet} Let $X$ be a Hausdorff locally convex space, $A \subset X$ be a weakly complete, metrizable, and proper convex cone. For any $a \in A$, there exists a Borel probability measure $\mu$ with $\operatorname{supp} \mu \subseteq \operatorname{Ext}(A)$ such that
    $$f(a) = \int_{y \in \operatorname{Ext}(A)} f(y) \dd\mu(y), \quad \text{for all } f \in X^*.$$ 
    %\red{What is $A^*$?}
\end{lemma}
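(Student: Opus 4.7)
The plan is to sketch the standard proof of the Choquet Representation Theorem for convex cones by reducing to the compact convex case. First, I would construct a base for $A$: under the hypotheses (weakly complete, metrizable, proper convex cone in a Hausdorff locally convex space), one can find a closed hyperplane $H \subset X$ such that $B := A \cap H$ is a convex base, meaning every nonzero $a \in A$ has a unique representation $a = t \cdot b$ with $t > 0$ and $b \in B$. The properness of the cone is precisely what ensures this decomposition is well-defined. Using weak completeness and metrizability of $A$, one checks that $B$ is a weakly compact metrizable convex subset of $X$; this is the key topological step.

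Next, I would apply the classical Choquet theorem for compact convex metrizable sets to $b \in B$: there exists a Borel probability measure $\nu$ on $B$ supported on $\operatorname{Ext}(B)$ with $f(b) = \int_{\operatorname{Ext}(B)} f(e) \dd\nu(e)$ for all $f \in X^*$. The classical argument proceeds via the Choquet partial ordering on probability measures on $B$ (defined by $\mu \prec \mu'$ iff $\int g \dd\mu \leq \int g \dd\mu'$ for every continuous convex function $g$ on $B$), producing via Zorn's lemma a maximal representing measure for $b$; in the metrizable setting $\operatorname{Ext}(B)$ is a $G_\delta$ subset of $B$ and any maximal measure is supported on it. One then transfers the result to the cone by a pushforward: writing $a = t \cdot b$, the measure $\nu$ on $\operatorname{Ext}(B)$ pushes forward under the radial map $e \mapsto t e$ to a Borel probability measure $\mu$ on $\operatorname{Ext}(A)$, using that extreme points of the base $B$ correspond bijectively to extreme rays of the cone $A$. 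Linearity of each $f \in X^*$ then yields $f(a) = \int_{\operatorname{Ext}(A)} f(y) \dd\mu(y)$.

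The main obstacle I anticipate is the topological reduction: turning the weak completeness and metrizability hypotheses into weak compactness of the base $B$. This is a nontrivial use of these hypotheses, substituting for the more familiar compactness assumption in the standard form of Choquet's theorem, and it requires careful work with the interplay between the weak topology and the given topology on $X$ (for instance, ensuring that $B$ is closed in the weak topology and that the metric on $A$ restricts appropriately to $B$). Once this topological step is in place, the rest of the argument is the classical Choquet machinery plus a routine homogeneity extension to pass from the base back to the full cone.
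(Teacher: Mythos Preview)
The paper does not prove this lemma at all: it is stated as a citation of Theorem~30.22 in Choquet's \emph{Lectures on Analysis} and used as a black box. So there is no ``paper's own proof'' to compare against.

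Your sketch is a reasonable outline of one standard route to conical Choquet theory, and you correctly flag the crux: extracting a weakly compact metrizable base $B$ from the hypotheses. Be aware, though, that this step is genuinely delicate and is not quite how Choquet himself argues. The existence of a base $B = A \cap H$ is equivalent to the existence of a strictly positive continuous linear functional on $A$, which is not automatic for an arbitrary proper cone; and even when a base exists, its weak compactness is an additional condition (equivalent to local compactness of the cone at the origin). Choquet's own proof in the cited reference proceeds instead through the notion of \emph{caps}: a cap is a compact convex subset $K \subset A$ with $A \setminus K$ convex, and the main structural theorem is that a weakly complete metrizable proper cone is \emph{well-capped} (the union of its caps). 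One then applies the classical Choquet theorem on each cap and patches. Your base approach can be made to work in many concrete situations, but if you want a proof at the stated level of generality you should either verify carefully that a compact base exists under these hypotheses or switch to the cap argument.
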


\begin{prop}\label{Cond}
    $C$ is a proper convex cone on $\mathbb{R}^{V \times(-\infty, t_0)}$
    under the topology of pointwise convergence, and
    satisfies all the conditions of Lemma \ref{Choquet}.
\end{prop}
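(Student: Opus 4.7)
The plan is to verify, one at a time, the four hypotheses of Lemma \ref{Choquet}. The ambient space $\mathbb{R}^{V \times (-\infty, t_0)}$ with the product (pointwise convergence) topology is a product of copies of $\mathbb{R}$, hence a Hausdorff locally convex TVS; its continuous dual consists of finitely supported linear functionals, so the weak topology and the pointwise topology agree on $C$. Linearity of the heat equation makes $C$ a convex cone (nonnegative combinations of nonnegative smooth solutions are again nonnegative smooth solutions), and properness is clear: $u \in C \cap (-C)$ forces $u \geq 0$ and $-u \geq 0$, so $u \equiv 0$.

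The substantive step is closedness of $C$ in the pointwise topology, which, because the ambient product space is complete, gives weak completeness. Suppose $u_n \in C$ converges pointwise to some $u \geq 0$. I would use the local Harnack inequality (Lemma \ref{Harnack}) to bound $\{u_n\}$ uniformly on every $F \times [a,b]$ with $F \subset V$ finite and $[a,b] \subset (-\infty, t_0)$ compact: fixing a reference point $(y_0, t_0')$ with $t_0' > b$, the estimate gives $u_n(x,t) \leq u_n(y_0, t_0') \exp\{\ldots\}$ with a constant uniform over $F \times [a,b]$, and $u_n(y_0, t_0')$ converges hence is bounded. Since bounded geometry turns $\Delta$ into a finite-sum operator with uniformly bounded coefficients, the heat equation $\partial_t u_n = \Delta u_n$ then propagates the bound to $\partial_t u_n$ on a slightly smaller set, and iterating yields uniform bounds on all $\partial_t^k u_n$. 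Arzel\`a--Ascoli in $t$ at each fixed vertex upgrades pointwise convergence to local $C^\infty$ convergence, so $u \in C^\infty(V \times (-\infty, t_0))$ and $\partial_t u = \Delta u$.

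For metrizability, I would show that on $C$ the pointwise topology coincides with the topology of uniform convergence on the sets $F \times [a,b]$, the latter being metrizable via a countable family of seminorms (since $V$ is countable by connectedness and local finiteness, and the intervals $[a,b]$ can be restricted to rational endpoints). The identification of the two topologies on $C$ rests on the same equicontinuity in $t$ coming from Harnack plus bounded geometry. The main obstacle throughout is precisely this regularity upgrade, turning pointwise convergence of nonnegative ancient solutions into $C^\infty_{\mathrm{loc}}$ convergence, for which both the local Harnack inequality and the bounded geometry control on $\Delta$ are essential.
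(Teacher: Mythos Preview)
Your proposal is correct and follows essentially the same approach as the paper: both verify Hausdorff/local convexity of the product space trivially, use the local Harnack inequality (Lemma~\ref{Harnack}) together with bounded geometry to bound $\partial_t^k u_n$ uniformly and thereby upgrade pointwise convergence to $C^1$ (or better) convergence in time for weak completeness, and exploit countability plus Harnack-induced equicontinuity for metrizability. The only cosmetic difference is that the paper metrizes via a countable dense subset $D\subset V\times(-\infty,t_0)$ on which pointwise convergence already determines the full pointwise topology on $C$, whereas you identify the pointwise topology on $C$ with a compact-open topology generated by countably many seminorms; both routes rest on the same equicontinuity input.
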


\begin{proof}

    (1) $\mathbb{R}^{V \times(-\infty, t_0)}$ is Hausdorff.

    The uniqueness of limits of sequences in this topology yields the result.

    (2) $\mathbb{R}^{V \times(-\infty, t_0)}$ is locally convex.

    For any $a \in V \times(-\infty, t_0)$ and $u \in X$, let $p_a(u) = u(a)$.
    One easily checks that $p_a(u)$ is a semi-norm on
    $\mathbb{R}^{V \times(-\infty, t_0)}$. Let $\mathscr{P}
        = \{p_a : a \in V \times(-\infty, t_0)\}$ be a family of semi-norms on $\mathbb{R}^{V \times(-\infty, t_0)}$. One can show that $\mathscr{P}$ is a separating family,
    and by \cite[Theorem 1.37]{rudin1973functional}, $\mathscr{P}$
    generates a balanced and convex local base at the origin.

    (3) $C$ is weakly complete.

   Let $\{u_n\}$ be a weak Cauchy sequence in $C$, which means that $\{Tu_n\}$ is a Cauchy sequence for all $T \in C^*$. Fix $(x, t) \in V \times(-\infty, t_0)$, and define $T_{(x, t)}u = u(x, t),\forall u\in C$. Thus, $\{u_n(x, t)\}_{n=1}^{+\infty}$ is a Cauchy sequence. We define
    $$u_\infty(x,t):=\lim_{n\to \infty}u_n(x, t),\quad \forall (x, t) \in V \times(-\infty, t_0).$$
     It suffices to show that $u_0 \in C$. 

%\red{Do we need to discuss other cases of $(t_1,t_2)$?}

By the local Harnack inequality, Lemma \ref{Harnack},
for any $t\in(t_1,t_2)$ with $0<t_1<t_2<\infty$ and $k=0,1$ or $2,$ %\begin{eqnarray*}
$$|\partial_t^k u_n(x,t)|=|\Delta^k u_n(x,t)|\leq C\max_{B_k(x)}u_n(\cdot, t)\leq 
 Cu_n(x,t_2)\leq \tilde{C}<\infty.$$%\end{eqnarray*}
By passing to a subsequence, we get $C^1$ convergence in time for $u_n,$ and hence $u_\infty$ is a solution to the heat equation. It is obvious that $u_\infty\in C.$ 

    (4) $C$ is metrizable.

    Take a countable dense subset $D \subset V \times(-\infty, t_0).$ We can equip the elements of $C$ with a metric only dependent on the value of functions taken on $D$ such that it preserves the pointwise convergence on $D$.
  By the local Harnack inequality, Lemma \ref{Harnack}, in $C$,
   the pointwise convergence on $D$ coincides with the pointwise convergence on $V \times(-\infty, t_0)$, and thus we can define a metric on $C$ compatible with its original topology.
\end{proof}

\begin{prop}\label{Representation}
    For $u \in C$, there exist a Borel probability measure $\nu$ on $\mathbb{R}$,
    and  a family of nonnegative functions $\{w_\lambda\}$(dependent on $u$)
    satisfying $\Delta w_\lambda = \lambda w_\lambda$, such that
    \begin{equation}\label{27}
        u(x, t) = \int_{\mathbb{R}} e^{\lambda t} w_\lambda(x) \dd\nu(\lambda).
    \end{equation}

    Furthermore, the function $\lambda \mapsto w_\lambda(x)$ is a Borel function.
\end{prop}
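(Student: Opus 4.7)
The plan is to apply Choquet's theorem (Lemma~\ref{Choquet}), whose hypotheses are verified by Proposition~\ref{Cond}, and then to use Proposition~\ref{Ext} to reparameterize the resulting measure by the eigenvalue $\lambda \in \mathbb{R}$. Concretely, since the topology on $\mathbb{R}^{V \times (-\infty, t_0)}$ is pointwise convergence, each evaluation map $T_{(x,t)}: v \mapsto v(x,t)$ lies in $X^*$; applying Lemma~\ref{Choquet} with $f = T_{(x,t)}$ produces a Borel probability measure $\mu$ with $\operatorname{supp} \mu \subseteq \operatorname{Ext}(C)$ such that
$$
u(x,t) \;=\; \int_{\operatorname{Ext}(C)} y(x,t)\, \dd\mu(y), \qquad (x,t) \in V \times (-\infty, t_0).
$$

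Next I would use Proposition~\ref{Ext} to define the eigenvalue map $\Lambda: \operatorname{Ext}(C) \setminus \{0\} \to \mathbb{R}$ by $\Lambda(y) = \lambda(y)$, where $y(x,t) = e^{\lambda(y) t} w_y(x)$. Since $y > 0$ everywhere on its domain (by Lemma~\ref{Harnack} applied in the proof of Proposition~\ref{Ext}), fixing a base point $x_0 \in V$ and $t_* < t_0 - 1$ gives the explicit formula $\Lambda(y) = \log\bigl(y(x_0, t_* + 1)/y(x_0, t_*)\bigr)$, which is Borel in the pointwise topology. I then set $\nu := \Lambda_* \mu$, a Borel probability measure on $\mathbb{R}$, and disintegrate $\mu$ along $\Lambda$, writing $\dd\mu(y) = \dd\mu_\lambda(y)\,\dd\nu(\lambda)$, where each $\mu_\lambda$ is concentrated on the fiber $\Lambda^{-1}(\lambda)$. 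Normalizing once and for all so that $w_y(x) := e^{-\Lambda(y) t_*} y(x, t_*)$ is Borel in $y$ for each $x$, I define
$$
w_\lambda(x) \;:=\; \int_{\Lambda^{-1}(\lambda)} w_y(x)\, \dd\mu_\lambda(y).
$$
Fubini then yields the claimed identity
$$
u(x,t) \;=\; \int_{\mathbb{R}} \int_{\Lambda^{-1}(\lambda)} e^{\lambda t} w_y(x)\, \dd\mu_\lambda(y)\, \dd\nu(\lambda) \;=\; \int_{\mathbb{R}} e^{\lambda t} w_\lambda(x)\, \dd\nu(\lambda),
$$
and $\lambda \mapsto w_\lambda(x)$ is Borel by the disintegration theorem. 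That $w_\lambda$ is a $\lambda$-eigenfunction follows because $\Delta$ acts pointwise as a finite linear combination of evaluations (the graph is locally finite), so the identity $\int (\Delta w_y)(x)\, \dd\mu_\lambda(y) = \lambda \int w_y(x)\, \dd\mu_\lambda(y)$ passes through the integral; nonnegativity of $w_\lambda$ is immediate.

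The main obstacle I expect is the clean justification of the disintegration step, which requires $(\operatorname{Ext}(C), \mu)$ to be sufficiently regular, e.g.\ that $\operatorname{Ext}(C)$ is a standard Borel (or Lusin) space so that regular conditional probabilities exist and measurability of $\lambda \mapsto w_\lambda(x)$ is automatic. This should follow from metrizability of $C$ (Proposition~\ref{Cond}(4)) together with the fact that the local Harnack inequality forces $\operatorname{Ext}(C)$ to be a Borel (indeed $G_\delta$-type) subset of the separable metrizable cone $C$. Everything else — the Choquet step, the splitting via Proposition~\ref{Ext}, and the preservation of the eigenfunction equation under integration — is structural and essentially automatic once the measurable framework is set up.
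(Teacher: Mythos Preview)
Your proposal is correct and follows essentially the same route as the paper: apply Choquet (via Proposition~\ref{Cond}), use Proposition~\ref{Ext} to define an eigenvalue map $\operatorname{Ext}(C)\to\mathbb{R}$, show it is Borel, disintegrate $\mu$ over the fibers to obtain $\nu$ and $w_\lambda$, and pass $\Delta$ through the integral. The only cosmetic difference is that you exhibit $\Lambda$ by the explicit formula $\Lambda(y)=\log\bigl(y(x_0,t_*+1)/y(x_0,t_*)\bigr)$, whereas the paper argues directly that the map $v\mapsto\lambda$ is continuous away from $0$; both yield the Borel measurability needed for disintegration, and the paper likewise justifies the Polish/Radon structure on $\operatorname{Ext}(C)$ exactly as you anticipate.
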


\begin{proof}
By Lemma \ref{Choquet}, for any $u \in C$, there exists a Borel probability measure $\mu$, with $\operatorname{supp}\mu \subseteq \operatorname{Ext}(C)$ such that for any continuous linear functional $f$,
    $$f(u)=\int_{v \in \operatorname{Ext}(C)} f(v)  \dd \mu(v).$$

    Fix $(x, t) \in V \times\left(-\infty, t_0\right)$ and define $f(h) = h(x, t), \forall h\in C. $ Then we have
    $$u(x,t) = \int_{v \in \operatorname{Ext}(C)} v(x,t) \dd\mu(v).$$

    By Proposition \ref{Ext},

    $$u(x,t) = \int_{v \in \operatorname{Ext}(C)} e^{\lambda t} w(x) \dd\mu(v).$$

    Next, we want to rewrite the integral over elements of
    $\operatorname{Ext}(C)$ as an integral over $\lambda$.
    The idea is to classify the elements in $\operatorname{Ext}(C)$ based on $\lambda$ and construct a new measure and integral.

    We introduce the classification function
    $$
        \begin{aligned}
            \varphi: \operatorname{Ext}(C)     & \rightarrow \mathbb{R}, \\
            0 \neq v(x, t) =e^{\lambda t} w(x) & \mapsto \lambda .
        \end{aligned}
    $$ In particular, we define $\varphi(0)=0,$ and by Proposition \ref{Ext},
    the function $\varphi$ is well-defined.

    Now we prove that $C$ is a Polish space.
    Since $C$ is metrizable, its weak completeness implies the completeness. The metric mentioned in Proposition~\ref{Cond}(4) naturally induces the C2 property and separability, making $C$ a Polish space.
    Moreover, $\operatorname{Ext}(C)$, being a $G_\delta$ set of $C$, is also a Polish space and therefore a Radon space.

    Next, we will prove that $\varphi$ is continuous except at 0, hence measurable.
Suppose $\{v_n\} \subset \operatorname{Ext}(C)$ converges to $0 \neq  {v_0} \subset \operatorname{Ext}(C)$. By Proposition \ref{Ext}, we can assume that for any $n \in \N_0$, $e^{\lambda_n t} \widetilde{w_n}(x)=v_n(x, t)$, where $\widetilde{w_n}$ satisfies $\Delta \widetilde{w_n}=\lambda_n \widetilde{w_n}$ and $\widetilde{w_0} \neq 0$. For any $x \in V $ and $ t \in (-\infty, t_0)$, we have
    \begin{equation}\label{lim1}
        e^{\lambda_n t} \widetilde{w_n}(x)=v_n(x, t) \rightarrow v_0(x, t)=e^{\lambda_0 t} \widetilde{w_0}(x) , n \rightarrow+\infty.  
    \end{equation}
    Clearly, (\ref{lim1}) still holds for $t-1$. Thus,
    \begin{equation}\label{lim2}
        e^{\lambda_n (t-1)} \widetilde{w_n}(x) \rightarrow  e^{\lambda_0 (t-1)} \widetilde{w_0}(x) , n \rightarrow+\infty.  
    \end{equation}
    Since $\widetilde{w_0} \neq 0$, deviding (\ref{lim1}) by (\ref{lim2}), we have
    $e^{\lambda_n} \rightarrow e^{\lambda_0} ,n \rightarrow+\infty,$
    and thus
    $\lambda_n \rightarrow \lambda_0 ,n \rightarrow+\infty.$ Therefore, $\varphi$ is continuous except at 0, thus Borel measurable.
    
    The measurable function $\varphi$ induces a decomposition \begin{equation}\label{Deco}
        \operatorname{Ext}(C)=\bigcup_{\lambda \in \mathbb{R}}\varphi^{-1}(\lambda).
    \end{equation} By the Disintegration Theorem, there exist conditional probability measures $\left\{\mu_\lambda\right\}$ with $\operatorname{supp} \mu_\lambda \subseteq \varphi^{-1}(\lambda)$, and a Borel probability measure $\nu$ on $\mathbb{R}$ such that

    \begin{equation}\label{bs}
        u=\int_{\mathbb{R}} \int_{v \in\varphi^{-1}(\lambda)} v \dd\mu_\lambda(v) \dd\nu(\lambda).
    \end{equation} Moreover, $\lambda \mapsto \mu_\lambda$ is a Borel function and if we define
    $
        u_\lambda:=\int_{v \in \varphi^{-1}(\lambda)} v \dd\mu_\lambda(v) =\int_{v \in \operatorname{Ext}(C)}  v \dd\mu_\lambda(v),
    $
    then $\lambda \mapsto u_\lambda$ is also a Borel function.

    Let $w_\lambda = e^{-\lambda t}u_\lambda$.
    Since $u_\lambda \mapsto e^{-\lambda t}u_\lambda $ is continuous in the topology of pointwise convergence, $\lambda \mapsto w_\lambda$ is Borel.
    By Proposition \ref{Ext}, for any
    $ v \in \varphi^{-1}(\lambda) \subset \operatorname{Ext}(C), e^{-\lambda t}v$ is independent of $t$.

   %\red{ Thus, $$w_\lambda:= \frac{u_\lambda}{e^{\lambda t}}=\frac{\int_{v \in \varphi^{-1}(\lambda)} v \dd\mu_\lambda(v)}{e^{\lambda t}}
    %=\int_{v \in \varphi^{-1}(\lambda)} \frac{v}{e^{\lambda t}} \dd\mu_\lambda(v)$$
    %is independent of $t,$ and we have
        Thus, $$w_\lambda:= e^{-\lambda t}u_\lambda=e^{-\lambda t}\int_{v \in \varphi^{-1}(\lambda)} v \dd\mu_\lambda(v)$$
    is independent of $t.$ Moreover, by $\partial_t v=\lambda v,$

 %%   \red{Chinese remark in Latex code.我需要引入$\lambda$-eigenfunction空间吗?
  %  严格来讲，上一行最后一项积分号里的元素并不属于$\varphi^{-1}(\lambda)$，这样的积分写出来是合理的吗？
  %  但是$\varphi^{-1}(\lambda)$和$\lambda$-eigenfunction空间中的元素只差$e^\lambda t$, 所以从某种意义上来说这两个空间是等同的，所以不引入一个新的空间、新空间上的新测度可以吗？}

    %By Proposition \ref{Ext}, for any
    %$ v \in \varphi^{-1}(\lambda) \subseteq \operatorname{Ext}(C), \exists \ w \text{ s.t. } v(x, t)=e^{\lambda t} w(x).$

 %\red{   $$
%\Delta w_\lambda =\frac{\Delta u_\lambda}{e^{\lambda t}}=\int_{v \in \varphi^{-1}(\lambda)}{\Delta v \dd \mu_\lambda(v)}
%=\int_{v \in \varphi^{-1}(\lambda)} \frac{\partial_t v \dd \mu_\lambda(v)}{e^{\lambda t}} 
 %=\int_{v \in \varphi^{-1}(\lambda)} \lambda e^{\lambda t} w_v(x) \dd \mu_\lambda(v)
 %=\lambda \int_{v \in \varphi^{-1}(\lambda)} \frac{v}{e^{\lambda t}} \dd \mu(\lambda)=\lambda w_\lambda.
    %$$}
    
       $$
\Delta w_\lambda =e^{-\lambda t}\Delta u_\lambda=e^{-\lambda t}\int_{v \in \varphi^{-1}(\lambda)}{\Delta v \dd \mu_\lambda(v)}
=e^{-\lambda t}\int_{v \in \varphi^{-1}(\lambda)} \partial_t v \dd \mu_\lambda(v) 
 %=\int_{v \in \varphi^{-1}(\lambda)} \lambda e^{\lambda t} w_v(x) \dd \mu_\lambda(v)
 =\lambda w_\lambda.
    $$

%    \red{Chinese remark in Latex code.这里有一样的问题}

    %\ \text{and} \  \Delta w=\lambda w.
   % Thus, $w(x)=v(x, 0), $ and $ v(x, t)=e^{\lambda t} v(x,0).$

    %Hence,
    %$$
    %    \begin{aligned}
    %        u_\lambda(x,t)
    %        %=\int_{v \in \varphi^{-1}(\lambda)} v(x, t) \dd\mu_\lambda(v) 
    %       & =\int_{v \in \varphi^{-1}(\lambda)} e^{\lambda t} v(x, 0) \dd\mu_\lambda(v) 
    %            =e^{\lambda t}\int_{v \in \varphi^{-1}(\lambda)} v(x, 0) \dd\mu_\lambda(v):=e^{\lambda t} w_\lambda(x).
    %    \end{aligned}
    %$$
    %where we write $w_\lambda(x):=\int_{v \in \varphi^{-1}(\lambda)} v(x, 0) \dd\mu_\lambda(v)$.

    %Noting that the Laplacian only involves a finite sum, we have
    %\begin{align*}
    %    \Delta w_\lambda(x) & =\Delta \int_{v \in \varphi^{-1}(\lambda)}  w(x) \dd\mu_\lambda(v)
        %& =\int_{v \in \varphi^{-1}(\lambda)} \Delta v(x, 0) \dd{\mu_\lambda}(v) \\
    %    =\int_{v \in \varphi^{-1}(\lambda)} \Delta w(x) \dd\mu_\lambda(v)                       
        %                    & =\int_{v \in \varphi^{-1}(\lambda)} \lambda w(x) \dd\mu_\lambda(v)
     %   =\lambda \int_{v \in \varphi^{-1}(\lambda)} w(x) \dd\mu_\lambda(v)                       
        %& =\lambda \int_{v \in \varphi^{-1}(\lambda)} v(x, 0) \dd\mu_\lambda(v) \\
    %                         =\lambda w_\lambda(x).
    %\end{align*}

    This proves the proposition.
\end{proof}

\begin{rem}\label{Rem}
    Furthermore, we can also require $$\operatorname{supp}\nu \subseteq \Lambda := \{ \lambda: \Delta w=\lambda w\ 
    \mathrm{has\ a\ nonzero\ nonnegative\ solution\ on}\  G\}.$$

    In fact, by Lemma~\ref{Spec} and Proposition~\ref{Zer},
    $\Lambda =[-\lambda_1(G),+\infty).$
    Hence, $\Lambda$ is a closed set on $\mathbb{R}$, and is naturally a Radon space.
    By the definition of $\Lambda$,
    we know that $\forall \lambda \notin \Lambda, \ \varphi^{-1}(\lambda)=\varnothing$.
    Hence, we only need to modify the decomposition (\ref{Deco}) to
    $$
        \operatorname{Ext}(C)=\bigcup_{\lambda \in \Lambda}\varphi^{-1}(\lambda),
    $$
    and change $\mathbb{R}$ to $\Lambda$.
\end{rem}

\begin{rem}\label{24}
    We can also require that $\forall \ \lambda \in \operatorname{supp}
        \nu, \ w_\lambda>0$. In fact, by Proposition \ref{Zer} below, $\left\{\lambda: w_\lambda = 0\right\}^c=\left\{\lambda: w_\lambda \neq 0\right\}
        =\left\{\lambda: w_\lambda>0\right\}$
    is measurable since $\lambda \mapsto w_\lambda$ is a Borel function. Thus, we can restrict $\nu$ on this set.
\end{rem}

\section{Estimates of the Growth Rate of the generalized eigenfunction}

The following proposition is elementary.
\begin{prop}\label{Zer}
    On a weighed graph $G=(V,E,w,m)$, let $w$ be a nonnegative $\lambda-$eigenfunction on $G$,
    $\lambda \in \mathbb{R}.$
    If there exists $x_0 \in V$ s.t. $w(x_0)=0$, then $w(x)=0, \forall x \in V.  $
\end{prop}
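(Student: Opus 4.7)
The plan is to exploit the formula for $\Delta w$ at a vanishing vertex together with the positivity of the edge weights and the connectedness of $G$, i.e.\ a discrete strong minimum principle argument.

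First I would evaluate the eigenvalue equation at the zero $x_0$. Since
$$\Delta w(x_0)=\sum_{y\sim x_0}\frac{w_{x_0y}}{m_{x_0}}\bigl(w(y)-w(x_0)\bigr)=\sum_{y\sim x_0}\frac{w_{x_0y}}{m_{x_0}}w(y),$$
and the equation $\Delta w(x_0)=\lambda w(x_0)=0$ forces the sum on the right to vanish. Each summand is nonnegative, because $w_{x_0y}>0$ and $m_{x_0}>0$ (by the definition of a weighted graph) and $w(y)\ge 0$ (by hypothesis). Hence every term must vanish, which gives $w(y)=0$ for all $y\sim x_0$.

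Next I would propagate this to all of $V$ using connectedness. Let $Z=\{x\in V: w(x)=0\}$. The argument above shows that if $x\in Z$ then every neighbor of $x$ lies in $Z$. Equivalently, $Z$ is closed under taking neighbors, so for any $y\in V$ and any path $x_0\sim z_1\sim\cdots\sim z_\ell=y$ realizing the combinatorial distance $\rho(x_0,y)$, an induction on $\ell$ shows $z_i\in Z$ for all $i$, and in particular $y\in Z$. Since $G$ is connected such a path exists for every $y\in V$, whence $w\equiv 0$.

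There is essentially no obstacle: the positivity of the edge weights and vertex measures in the definition of a weighted graph, together with the local structure of $\Delta$, reduces the statement to a one-step propagation lemma, which is then iterated along shortest paths. Note that the value of $\lambda$ plays no role beyond ensuring that $\Delta w(x_0)=0$; in particular the result holds for every $\lambda\in\mathbb{R}$, as needed in Remark~\ref{Rem} and Remark~\ref{24}.
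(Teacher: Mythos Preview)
Your proof is correct and follows exactly the same approach as the paper: evaluate $\Delta w=\lambda w$ at the vanishing vertex to force all neighbors to vanish, then iterate using connectedness. You have merely supplied the details that the paper leaves implicit.
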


\begin{proof}
  %  Assume that $w$ is a nonnegative solution of the function: $ \Delta w=\lambda w$ on $G$.

  By the equation $\Delta w=\lambda w$ at the vertex $x_0,$ we get $w(y)=0, \forall y \sim x_0.$ Applying the above result for these $y$ and using the iteration,
 we prove 
    $w(x)=0, \forall x \in V$ by the connectedness of $G.$ 

\end{proof}

We recall a well-known result.
\begin{lemma}[Agmon-Allegretto-Piepenbrink Theorem, see e.g. Theorem 4.14 in \cite{keller2021graphs}]\label{Spec}
    %\cite{KLW}定理4.14(P239)
    Assume graph $G=(V,E,w,m)$
    is infinite, locally finite and connected.
    Let $\lambda_1(G)$ denote the bottom of the spectrum of Laplacian on $G$
    and $\lambda \in \mathbb{R}$.
    Then the following are equivalent:

    (1) $\lambda \geq -\lambda_1(G)$.

    (2) $\Delta w=\lambda w$ has a positive solution $w$.
\end{lemma}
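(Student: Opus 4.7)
The plan is to prove the two directions of the equivalence separately.

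For (2) $\Rightarrow$ (1), my approach is the ground-state transform. Given a positive $\lambda$-eigenfunction $w$, for any $\varphi \in C_c(V)$ write $\psi := \varphi/w$, which is well defined since $w>0$. Starting from the standard Dirichlet form identity $\langle -\Delta\varphi, \varphi\rangle_m = \tfrac{1}{2}\sum_{x,y} w_{xy}(\varphi(x)-\varphi(y))^2$ and applying the elementary identity
$$(as-bt)^2 = ab(s-t)^2 + (a-b)(as^2-bt^2)$$
with $a=w(x), b=w(y), s=\psi(x), t=\psi(y)$, one splits the Dirichlet energy into two sums. The first is manifestly nonnegative, while the second simplifies, via $\sum_y w_{xy}(w(x)-w(y)) = -m_x\Delta w(x) = -m_x\lambda w(x)$, to $-\lambda\|\varphi\|_m^2$. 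The resulting Agmon identity
$$\langle -\Delta\varphi, \varphi\rangle_m + \lambda\|\varphi\|_m^2 = \tfrac{1}{2}\sum_{x,y}w_{xy}w(x)w(y)(\psi(x)-\psi(y))^2 \geq 0,$$
together with the variational characterization of $\lambda_1(G)$, immediately yields $\lambda_1(G) \geq -\lambda$.

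For (1) $\Rightarrow$ (2), I construct a positive $\lambda$-eigenfunction by an exhaustion and Harnack-compactness argument. Fix a finite exhaustion $V_1\subset V_2\subset\cdots\nearrow V$ with $x_0\in V_1$, and let $\mu_n$ be the first Dirichlet eigenvalue of $-\Delta$ on $V_n$. By min-max, $\mu_n\searrow\lambda_1(G)$; moreover $\mu_n>\lambda_1(G)$ strictly, because a minimizer $\phi_n\in C_c(V_n)$ with $\mu_n=\lambda_1(G)$ would be a compactly supported $\lambda_1(G)$-eigenfunction of $-\Delta$ on all of $V$, and a first-variation at any $y\in V\setminus V_n$ adjacent to $\partial V_n$ would force $\phi_n\equiv 0$ on $\partial V_n$ and, by propagation, $\phi_n\equiv 0$ everywhere. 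Consequently $\mu_n+\lambda > \lambda_1(G) + \lambda \geq 0$, so $-\Delta+\lambda$ with Dirichlet boundary condition on $V_n$ is an invertible M-matrix admitting an entrywise positive inverse. The boundary value problem $(-\Delta+\lambda)u_n=0$ on $V_n$ with $u_n\equiv 1$ on $V\setminus V_n$ then has a unique solution $u_n>0$; normalize $w_n := u_n/u_n(x_0)$ so that $w_n(x_0)=1$ and $\Delta w_n=\lambda w_n$ on $V_n$.

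The main obstacle is passing to the limit, since Lemma~\ref{Spec} is stated without the bounded geometry hypothesis. However, each $w_n$ is nonnegative and satisfies the linear equation $\Delta w_n = \lambda w_n$ at every vertex of $V_n$; solving for neighbor values gives
$$\sum_{y\sim x}\frac{w_{xy}}{m_x}w_n(y) = \Bigl(\lambda + \sum_{y\sim x}\frac{w_{xy}}{m_x}\Bigr)w_n(x),$$
and since $w_n\geq 0$ each $w_n(y)$ is controlled by $w_n(x)$ times a constant depending only on local graph data. Inducting on combinatorial distance from $x_0$ yields a uniform bound $w_n(x)\leq C(\lambda,x)<\infty$ on every fixed vertex. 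A diagonal subsequence extraction over the countable set $V$ produces a pointwise limit $w\geq 0$ with $w(x_0)=1$ and $\Delta w = \lambda w$ on all of $V$. Finally, Proposition~\ref{Zer} upgrades $w\geq 0$ to $w>0$, delivering the desired positive $\lambda$-eigenfunction.
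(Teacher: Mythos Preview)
The paper does not prove Lemma~\ref{Spec}; it is stated as a known result with a pointer to \cite[Theorem~4.14]{keller2021graphs}, so there is no in-paper argument to compare against. What you have written is the standard proof and it is correct: the ground-state (Agmon) transform handles $(2)\Rightarrow(1)$, and Dirichlet exhaustion with a local Harnack bound plus diagonal extraction handles $(1)\Rightarrow(2)$.

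Two places could be made slightly more explicit, though neither is a genuine gap. First, in the strictness $\mu_n>\lambda_1(G)$ you are implicitly using that the first Dirichlet eigenfunction on $V_n$ can be taken nonnegative (Perron--Frobenius), so that the equation $-\Delta\phi_n=\lambda_1(G)\phi_n$ at a vertex $y\in V\setminus V_n$ adjacent to $V_n$ really forces the neighboring values to vanish. Second, the neighbor estimate $w_n(y)\le C(\lambda,x)\,w_n(x)$ requires $\lambda+\sum_{z\sim x}w_{xz}/m_x>0$; this follows a posteriori from the already-established positivity of $w_n$, since if the coefficient were zero the eigenvalue equation at $x$ would force $w_n$ to vanish on all neighbors of $x$. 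With those remarks your proof is complete and self-contained, whereas the paper simply invokes the literature.
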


The maximum principle is well-known.

\begin{lemma}[Lemma 1.39 in \cite{grigor2018introduction}]\label{Maximal}
    Assume $G=(V,E,w,m)$ is an infinite and connected graph,
    and $w$ is a subharmonic function, i.e. $\Delta w \geq 0$.
    Then $\forall x_0 \in V, n \in \mathbb{N}$,
    \begin{equation}\label{Max}
        \max _{\partial B_n\left(x_0\right)} w=\max _{B_n\left(x_0\right)} w.
    \end{equation}
\end{lemma}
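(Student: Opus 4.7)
The plan is to prove the strong maximum principle by contradiction: assume the maximum on $B_n(x_0)$ is attained only at interior vertices, use subharmonicity to propagate equality to neighbors, and derive a contradiction via connectedness and the infiniteness of $G$.

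Set $M := \max_{B_n(x_0)} w$ and $S := \{x \in B_n(x_0) : w(x) = M\}$. Suppose toward a contradiction that $S \cap \partial B_n(x_0) = \varnothing$, so that $S \subseteq B_{n-1}(x_0)$. The key local step is: for any $x \in S$, since $\rho(x,x_0) \leq n-1$, every neighbor $y$ of $x$ lies in $B_n(x_0)$ and hence satisfies $w(y) \leq M = w(x)$. Then subharmonicity
$$0 \leq \Delta w(x) = \sum_{y \sim x}\frac{w_{xy}}{m_x}\bigl(w(y)-w(x)\bigr)$$
is a sum of nonpositive terms, so each term must vanish. This forces $w(y) = M$, i.e.\ $y \in S$, for every neighbor $y$ of $x$.

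Hence $S$ is closed under the neighbor relation: every vertex in $S$ has all its neighbors in $S$. Since $G$ is connected and $S$ is nonempty (it contains any maximizer of $w$ on $B_n(x_0)$), iterating this propagation along walks in $G$ shows $S = V$. But then $V \subseteq B_{n-1}(x_0)$, which is finite by local finiteness of $G$, contradicting the hypothesis that $G$ is infinite. Therefore $S \cap \partial B_n(x_0) \neq \varnothing$, which is exactly \eqref{Max}.

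The main subtlety, which is also where the hypotheses enter, lies in the interplay between two controls in the propagation step: the inequality $w(y) \leq M$ applied to a neighbor $y$ of $x \in S$ only makes sense while $y$ is still inside $B_n(x_0)$, which is why the contradiction hypothesis $S \subseteq B_{n-1}(x_0)$ is crucial (it ensures the propagation never tries to equate $w(y)$ to $M$ for some $y$ outside $B_n(x_0)$ where we have no a priori bound). The infiniteness of $G$ is used exactly once, at the very end, to rule out the degenerate possibility $V = B_{n-1}(x_0)$.
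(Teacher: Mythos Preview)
Your proof is correct. The paper does not supply its own proof of this lemma; it simply records it as the well-known maximum principle and cites Lemma~1.39 in Grigor'yan's book, so there is nothing to compare against beyond noting that your argument is the standard one.
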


For $x_0 \in V, w:V \to  \mathbb{R}$, we denote by
$$
    M_n:=\max \limits _{B_n\left(x_0\right)} w.
$$

We prove the estimates on the growth rate of $M_n$ for a nonnegative generalized eigenfunction.

\begin{prop}\label{Grow0}
    Assume that the graph $G=(V,E,w,m)$ has bounded geometry,
    $x_0 \in V$, $\lambda \geq 0$. There exists $C=C(\lambda,c_0),$ where $c_0$ is the constant in Definition \ref{BG}, such that for any positive solution $w$ of the equation $\Delta w=\lambda w$,     $$
        \varlimsup _{n \rightarrow+\infty}\frac{\ln M_n}{n} =
        \varlimsup _{n \rightarrow+\infty}\max _{B_n\left(x_0\right)} \frac{\ln w}{n} \leq C.
    $$

\end{prop}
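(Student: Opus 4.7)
The plan is to derive a single-edge propagation estimate $w(y) \le K\, w(x)$ whenever $y \sim x$, with $K = K(\lambda, c_0)$, and then iterate along a combinatorial geodesic from $x_0$. The bounded geometry hypothesis will turn the nonuniform weights $w_{xy}$ and $m_x$ into uniform constants, while the sign condition $\lambda \ge 0$ together with $w \ge 0$ will let me discard all but one nonnegative summand on the left of the eigenvalue identity.

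Concretely, I would rewrite $\Delta w(x) = \lambda w(x)$ in the form
$$\sum_{y \sim x} \frac{w_{xy}}{m_x}\, w(y) \;=\; \left(\lambda + \sum_{y \sim x} \frac{w_{xy}}{m_x}\right) w(x),$$
note that both sides are nonnegative since $\lambda \ge 0$ and $w \ge 0$, and then for any fixed neighbor $y_0 \sim x$ keep only the single term $\frac{w_{xy_0}}{m_x}\, w(y_0)$ on the left to obtain
$$w(y_0) \;\le\; \frac{m_x}{w_{xy_0}}\left(\lambda + \sum_{y \sim x} \frac{w_{xy}}{m_x}\right) w(x).$$
Using Definition~\ref{BG}, $m_x / w_{xy_0} \le c_0^2$ and $\sum_{y\sim x} w_{xy}/m_x \le c_0^2 \deg x \le c_0^3$, so the above gives $w(y_0) \le K\, w(x)$ with $K := c_0^2(\lambda + c_0^3)$, uniformly on $V$.

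For any $z \in B_n(x_0)$, choosing a shortest path $x_0 \sim x_1 \sim \dots \sim x_k = z$ with $k \le n$ and applying the one-step estimate $k$ times yields $w(z) \le K^n w(x_0)$; in particular $M_n \le K^n w(x_0)$. Taking logarithms and dividing by $n$ gives
$$\frac{\ln M_n}{n} \;\le\; \ln K + \frac{\ln w(x_0)}{n},$$
so letting $n \to \infty$ produces $\varlimsup_{n \to \infty} (\ln M_n)/n \le \ln K =: C(\lambda, c_0)$, as required. There is no real obstacle once the one-step comparison is isolated; the rest is pure iteration. The only subtlety worth flagging is that the argument uses $\lambda \ge 0$ essentially to guarantee that the factor $\lambda + \sum_{y\sim x} w_{xy}/m_x$ is nonnegative, so that dropping summands on the left-hand side yields an inequality in the right direction.
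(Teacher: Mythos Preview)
Your argument is correct and follows essentially the same route as the paper: derive a one-step neighbor comparison $w(y)\le K\,w(x)$ and iterate to bound $M_n$ by a geometric sequence. The only difference is cosmetic: the paper quotes the local elliptic Harnack inequality from \cite{keller2021graphs} for the one-step bound and then compares successive maxima $M_{n+1}/M_n\le C$, whereas you extract the same bound directly from the eigenvalue identity (using $\lambda\ge 0$ and $w\ge 0$ to drop nonnegative terms) and iterate along a geodesic from $x_0$; your version is slightly more self-contained but otherwise identical in substance.
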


\begin{proof}
    By the local Harnack inequality in \cite[Theorem 4.1]{keller2021graphs},
    there exists $C=C(\lambda,c_0) > 1$ such that for any $y \sim x$ with $w(y)>w(x)$,
    \begin{align*}
        C^{-1}\leq \frac{w(y)}{w(x)} \leq C.
    \end{align*}
    This yields that
    $$
        \frac{M_{n+1}}{M_n} \leq C.
    $$ By the iteration, one gets $M_n\leq C^{n-1} M_1.$
  This proves the result.
\end{proof}

\begin{prop}\label{Grow}
    Assume weighed graph $G=(V,E,w,m)$ has bounded geometry,
    $x_0 \in V$, $\lambda > 0.$ There exists $C=C(\lambda, c_0)>1$ such that for any positive non-constant $\lambda-$eigenfunction $w,$
    $$
        \varliminf _{n \rightarrow+\infty}\frac{\ln M_n}{n} =
        \varliminf _{n \rightarrow+\infty} \max _{\partial B_n(x_0)} \frac{\ln w}{n} \geq \ln C.
    $$

\end{prop}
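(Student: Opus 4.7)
The plan is to avoid invoking any Harnack inequality and apply the maximum principle directly, combined with the eigenvalue equation evaluated at a maximizer. Since $\lambda > 0$ and $w > 0$, the function $w$ is strictly subharmonic, so by Lemma \ref{Maximal} the value $M_n$ is attained at some $x_n \in \partial B_n(x_0)$. I would then expand $\Delta w(x_n) = \lambda w(x_n) > 0$ and observe that every neighbor $y$ of $x_n$ lies in $B_{n+1}(x_0)$, hence satisfies $w(y) \leq M_{n+1}$. Rearranging the equality at $x_n$ should produce a linear recursion of the form $M_{n+1} \geq (1+\alpha) M_n$.

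To make the constant explicit, under bounded geometry the total outgoing weight $\sum_{y \sim x_n} w_{x_n y}/m_{x_n}$ is at most $c_0^3$ (using $w_{xy} \leq c_0$, $m_x^{-1} \leq c_0$, $\deg x \leq c_0$). Since each summand $\frac{w_{x_n y}}{m_{x_n}}\br{w(y) - w(x_n)}$ is bounded above by $\frac{w_{x_n y}}{m_{x_n}}(M_{n+1} - M_n)$, the eigenvalue equation at $x_n$ gives
\begin{equation*}
\lambda M_n \;\leq\; c_0^3\, \br{M_{n+1} - M_n},
\end{equation*}
i.e.\ $M_{n+1} \geq (1 + \lambda/c_0^3)\, M_n$. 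Iterating from $M_0 = w(x_0) > 0$ and taking logarithms yields the desired lower bound on $\varliminf_n \frac{\ln M_n}{n}$ with the explicit constant $C := 1 + \lambda/c_0^3 > 1$, which depends only on $\lambda$ and $c_0$. The identification $\varliminf_n \frac{\ln M_n}{n} = \varliminf_n \max_{\partial B_n(x_0)} \frac{\ln w}{n}$ follows from Lemma \ref{Maximal}, together with monotonicity of $\ln$.

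I do not expect a serious obstacle. The essential point is that positivity of $\lambda$ provides the strict gain $M_{n+1} > M_n$ driving exponential growth; for $\lambda = 0$ the same computation would only give the trivial bound $M_{n+1} \geq M_n$. The hypothesis that $w$ be non-constant in the statement is actually automatic, since a positive constant is a $0$-eigenfunction but cannot solve $\Delta w = \lambda w$ for $\lambda > 0$ with $w > 0$.
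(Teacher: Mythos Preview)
Your proposal is correct and follows essentially the same argument as the paper: pick $x_n \in \partial B_n(x_0)$ attaining $M_n$ via Lemma~\ref{Maximal}, evaluate $\Delta w = \lambda w$ at $x_n$, bound the right-hand side by $c_0^3(M_{n+1}-M_n)$ using bounded geometry, and iterate to obtain the same explicit constant $C = 1 + \lambda/c_0^3$. Your side remark that the non-constant hypothesis is automatic for $\lambda>0$ is also correct.
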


\begin{proof}
   For $n \in \N,$ by the maximum principle, Lemma \ref{Maximal}, we choose $x_n \in \partial B_n\left(x_0\right)$ such that $w\left(x_n\right)= M_n$.
    %Since $N\left(x_n\right) \subset B_{n+1}\left(x_0\right)$
    By the equation of $w$ and the assumption of the bounded geometry,
$$
        \begin{aligned}
            \lambda w\left(x_n\right)\
             & =\sum_{x \sim x_n} \frac{w_{x_n x}}{m_{x_n}}\left(w(x)-w\left(x_n\right)\right) \\
             %& \leq c^2\sum_{x \sim x_n} |w(x)-w\left(x_n\right)|                              \\
             & \leq c_0^3 \left(\max _{B_{n+1}\left(x_0\right)}w-w\left(x_n\right)\right).
        \end{aligned}
    $$
Thus,
    \begin{align*}
        %M_{n+1} &\geq \frac{c^2d+\lambda}{c^2d} M_n,\\
        \frac{M_{n+1}}{M_n} \geq \frac{c_0^3+\lambda}{c_0^3}=:C.
    \end{align*} By the iteration, we prove the result. 
\end{proof}

\section{Proof of the Main result}
In this section, we prove Theorem~\ref{main}.
\begin{proof}
    Without loss of generality, we assume $u > 0$.

    By Proposition \ref{Representation},
    $$
        u(x, t)=\int_{\mathbb{R}} e^{\lambda t} w_\lambda(x) \dd \nu(\lambda),
    $$
    where $w_\lambda$ is a nonnegative solution of the equation $\Delta w=\lambda w$.
    By Remark \ref{Rem} and Lemma~\ref{Spec},
    $$\operatorname{supp}\nu \subset \Lambda = [-\lambda_1(G),+\infty).$$

    Therefore,
    $$
        u(x, t)=\int_{-\lambda_1(G)}^{+ \infty} e^{\lambda t} w_\lambda(x) \dd \nu(\lambda).
    $$

    Next, we argue by contradiction that supp$\ \nu = \left\{0\right\}.$

    Case 1:  $\lambda_1(G)=0$. Then

    $$
        u(x, t)=\int_{0}^{+ \infty} e^{\lambda t} w_\lambda(x) \dd \nu(\lambda).
    $$

    Suppose  $ \exists \
        [a,b] \subset (0,+\infty) \text { s.t. } \nu([a,b]) > 0.$

    By Proposition \ref{Representation}, the function $\lambda \mapsto w_\lambda(x)$ is Borel from $\mathbb{R} \rightarrow \mathbb{R}^{V}$ (the latter space is equipped with the topology of pointwise convergence). 
    Let $\Pi=[a, b]\cap \{\lambda:w_\lambda \neq 0\}$, which is still a Borel set. The restriction of this function on $\Pi$ remains a Borel function. Moreover, by Remark \ref{24}, $\nu(\Pi)=\nu([a, b])>0$.

    Set $
        F:=\{\ln w:  \Delta w=\lambda w, w>0, \lambda \in \Pi\}.
    $
    By Remark \ref{24}, $F$ is not empty.

    Define the function
    $$
        \begin{aligned}
            \varphi:  \Pi & \rightarrow F ,                          \\
            \lambda     & \mapsto \varphi(\lambda)=\ln w_\lambda .
        \end{aligned}
    $$

    By Proposition~\ref{Representation}, this function is well-defined.
    Since the function $w_\lambda \mapsto \ln w_\lambda$ is continuous from $\mathbb{R}^{V}\backslash\{0\} \rightarrow F$ (both with the topology of pointwise convergence),
    $\varphi$ is a Borel function with respect to pointwise convergence topology.

    By Proposition \ref{Grow0}, we can define a new distance function on $F$
    $$\rho_F(f, g):=\max \limits _{n \in \N}\left\{\frac{1}{n} \max \limits _{B_n(x_0)}|f-g|\right\} < +\infty.$$
 
 We claim that $\varphi$ is a Borel function in the new topology induced by the distance function $\rho_F$.
Note that for any $g\in F,$ $c>0,$
    $$\varphi^{-1}\left(f\in F: \left\{\rho_F(f, g)
        \leq c\right\}\right)=\bigcap_{n \in \N}\left\{\lambda \in \mathbb{R}: \max _{B_n(x_0)}|\varphi(\lambda)-g|
        \leq cn \right\}.$$

    One easily checks that the function $\varphi(\lambda) \mapsto
        \max \limits _{B_n\left(x_0\right)}|\varphi(\lambda)-g|$ is a continuous function from $F \rightarrow \mathbb{R}$ (where the topology of $F$ is induced by pointwise convergence).
    Thus the function $\lambda \mapsto \max \limits _{B_n\left(x_0\right)}|\varphi(\lambda)-g|$ is a Borel function, and hence $\varphi^{-1}\left(f \in F: \left\{\rho_F(f, g)
        \leq c\right\}\right)$ is Borel. This proves the claim.

    By Rusin's theorem, there exists a compact set $K \subset \Pi$ such that
    $
        \nu(K)>0, \varphi |_K
    $
    is a continuous function.
    Thus, there exists $\lambda_0 \in K$ such that $\nu\left(B_r\left(\lambda_0\right) \cap K\right)>0,  \forall r>0 $.
    Since $\varphi |_K$ is continuous at $\lambda_0$, $ \forall \ \varepsilon>0 $, $ \exists \ \delta>0 $ such that for any $ \lambda \in B_\delta(\lambda_0)\cap K$, 
    $$
   \frac{1}{n} \max _{ B_n\left(x_0\right)}\left|\varphi\left(\lambda_0\right)-\varphi(\lambda)\right| =\frac{1}{n} \max _{x\in B_n\left(x_0\right)}\left|\ln w_{\lambda_0}(x)-\ln w_\lambda(x)\right|          \leq \varepsilon, \quad \forall n \in \N.
    $$

    Hence for any $ \lambda \in B_\delta(\lambda_0)\cap K$, $n \in \N$ and $x \in  B_n\left(x_0\right)$, 
    $$
        \ln w_{\lambda_0}(x)\leq \ln w_\lambda(x)+n \varepsilon.
 $$

    Taking the average of $\lambda$ over $B_\delta\left(\lambda_0\right) \cap K$, and by Jensen's inequality,
  %  $$
   %     \ln w_{\lambda_0}(x) <
  %      \frac{1}{\nu\left(B_\delta\left(\lambda_0\right) \cap K\right)}
  %      \int_{B_\delta {\left(\lambda_0\right) \cap K}} \ln w_\lambda(x) \dd \nu ( \lambda)+n \varepsilon.
  %  $$

  %  By Jensen's inequality, the right-hand side
    $$
        \begin{aligned}
        \ln w_{\lambda_0}(x) &\leq
        \frac{1}{\nu\left(B_\delta\left(\lambda_0\right) \cap K\right)}
        \int_{B_\delta {\left(\lambda_0\right) \cap K}} \ln w_\lambda(x) \dd \nu ( \lambda)+n \varepsilon,\\
             & \leq \ln \left( \frac{1}{\nu\left(B_\delta\left(\lambda_0\right) \cap K \right)}
            \int_{B_\delta {\left(\lambda_0\right) \cap K}} w_\lambda(x) \dd \nu ( \lambda) \right) +n \varepsilon, \\
             & \leq \ln \left( \frac{1}{\nu\left(B_\delta\left(\lambda_0\right) \cap K \right)}
            \int_{\mathbb{R}} w_\lambda(x) \dd \nu(\lambda)\right)+n \varepsilon .
            %& =\ln v(x, 0)+n \varepsilon .
        \end{aligned}
    $$

    For fixed $ t \in (-\infty, t_0)$, by Proposition \ref{Representation}, we have
    $$
        \int_{\mathbb{R}} w_\lambda(x) d \nu(\lambda)=e^{-\lambda t} \int_{\mathbb{R}} e^{\lambda t}
        w_\lambda(x) \dd \nu(\lambda)=e^{-\lambda t} u(x, t).
    $$

    Hence, for all $x \in B_n\left(x_0\right)$, 
    $$
        \ln w_{\lambda_0}(x) \leq -\ln \nu \left(B_\delta\left(\lambda_0\right) \cap K\right)-\lambda t+\ln u(x, t) + n \varepsilon.
    $$

    Dividing on both sides by $n$, we get
    $$
        \frac{\ln w_{\lambda_0}(x)}{n} \leq \frac{\ln u\left(x, t\right)}{n}+\varepsilon
        +\frac{c}{n},
    $$
    where $c$ is a constant. Letting $x=x_n\in \partial B_n(x_0),$ which attains its maximum of $w_{\lambda_0}$ on $B_n(x_0),$ we have
    $$
        \frac{1}{n} \max_{B_n(x_0)} \ln w_{\lambda_0} \leq \frac{\ln u\left(x_n, t\right)}{n}+\varepsilon +\frac{c}{n} =\frac{\ln u\left(x_n, t\right)}{\rho(x_n,x_0)}+\varepsilon +\frac{c}{n} .
    $$

    Letting $n \to \infty$, by Lemma \ref{Grow}, and considering the assumption of spatial subexponential growth, we obtain
    $$
        0<\ln C \leq \varlimsup _{n \rightarrow +\infty} \frac{1}{n} \max _{B_n(x_0)} \ln w_{\lambda_0} \leq \varlimsup _{n \rightarrow +\infty} \frac{\ln u\left(x_n,t\right)}{\rho\left(x_n, t\right)}+\varepsilon = \varepsilon.
    $$

    Since $\varepsilon $ is arbitrary, this yields a contradiction.

  Hence, supp$\ \nu = \left\{0\right\}$, and consequently
    $$
        \begin{aligned}
            u(x, t) & =\int_{\mathbb{R}} e^{\lambda t} w_\lambda(x) \dd \nu(\lambda) \\
                    & =e^{0 \cdot t} w_0(x)                                          \\
                    & =w_0(x) .
        \end{aligned}
    $$

 That is, $u$ is independent of $t$, and is a harmonic function on $G$ with respect to $x$.

    \vspace{0.5em}

    Case 2: $\lambda_1(G) > 0$. Then
    $$
        u(x, t)=\int_{-\lambda_1}^{+ \infty} e^{\lambda t} w_\lambda(x) \dd \nu(\lambda).
    $$

    First, we argue by contradiction that $\operatorname{supp} \nu \subset [0,+\infty)$.
    Suppose it is not true, then there exists $[a, b] \subset (-\infty, 0)$ such that $\nu([a, b]) > 0$.
    Thus,
    $$
        \begin{aligned}
            u(x, t) & \geq \int_a^b e^{\lambda t} w_\lambda(x) \dd \nu(\lambda) \\
                    & \geq e^{a t} \int_a^b w_\lambda(x) \dd \nu(\lambda).
        \end{aligned}
    $$

    By using Rusin's theorem, we can prove $\int_a^b w_\lambda(x) \dd \nu(\lambda) > 0$.
    However, for fixed $x$, it contradicts with the assumption of subexponential growth with respect to time.
    Hence, we conclude that
    $$
        \operatorname{supp} \nu \subset [0,+\infty).
    $$

    The remaining part of the argument follows verbatim as in Case 1.

\end{proof}

\bigskip \textbf{Acknowledgements.} 
We thank Professor Qi S. Zhang for his suggestion of the problem and many helpful discussions.
B. Hua is supported by NSFC, no.11831004, and by Shanghai Science and Technology Program [Project No. 22JC1400100].

\bibliography{Liouville_Ancient_Heat}
\bibliographystyle{alpha}

\end{document}